\documentclass[a4paper,10pt]{article}
\usepackage{amsmath}
\usepackage{amsfonts}
\usepackage{a4wide}
\usepackage{amssymb}
\usepackage{amsthm}
\textwidth14cm
\textheight23cm
\flushbottom

\theoremstyle{plain}
\begingroup
\newtheorem{theorem}{Theorem}
\newtheorem*{mtheorem}{Main Theorem}
\newtheorem{lemma}[theorem]{Lemma}

\newtheorem{definition}[theorem]{Definition}
\newtheorem{rmk}[theorem]{Remark}

\newtheorem{question}[theorem]{Question}
\endgroup

\theoremstyle{remark}
\begingroup
\endgroup

\mathsurround=1pt
\mathchardef\emptyset="001F

\numberwithin{equation}{section}

\newcommand{\op}[1]{{\rm{#1}}}

\title{Decorrelation as an avatar of convexity}
\author{Mircea Petrache\footnote{UPMC Univ. Paris 6,  UMR 7598 Laboratoire Jacques-Louis Lions,  Paris, F-75005 France}
}
\date{July 2, 2015}
\begin{document}
 \maketitle
 \begin{abstract}
 If $X$ is a Polish space then we show that the product measure on $X^\infty$ is guaranteed to minimize $c$-energy amongst exchangeable measures with fixed marginals if and only if the interaction kernel $c$ defines a convex energy functional on probability measures. A reformulation of this condition close to the theory of positive definite functions is highlighted.
\end{abstract}

\section{Introduction}
This paper concerns a decorrelation phenomenon described in \cite{CFP}, within the study of mean field limits in quantum physics models motivated by \cite{lieb1} and more precisely taking place within the Kohn-Sham formulation \cite{kohnsham}. A question of interest is to rigorously explain the reason why decorrelation occurs. This question is relevant in the more general framework of optimal transportation with infinitely many marginals and general costs \cite{pass}. For a broader overview see also the recent survey \cite{dgn} and the references therein. What we found is that a rather simple convexity condition turns out to be equivalent to decorrelation, and this link is extremely robust and close to the classical topic of positive definite functions.

\subsection{Setting and main question}
Let $X$ be a separable metric space, endowed with a function on pairs of points $c:X\times X\to \mathbb R^+\cup \{+\infty\}$. Fix a Borel probability measure, $\mu\in\mathcal P(X)$ such that 
\begin{equation}\label{mufinite}
 \int_{X^2}c(x,y)\mu(dx)\mu(dy)<\infty.
\end{equation}
Sometimes $\mu(A)$ is interpreted as the probability of finding a particle in the region $A$ and $c$ represents some interaction energy between pairs of these particles.\\

We then consider the case of countably many copies of $\mu$ interacting with each other, which is modeled as follows. We endow $X^\infty$ with the product Borel $\sigma$-algebra and consider the so-called \emph{exchangeable} probability measures 
\[\mathcal P_{sym}(X^\infty)\]
on it \cite{diaconisfreedman}. An exchangeable measure is a measure which is invariant under all of those permutations of the copies of $X$ which leave all but finitely many of the copies invariant. For $\gamma\in\mathcal P_{sym}(X^\infty)$ we write
\[
 \gamma\mapsto\mu
\]
if the image measure of $\gamma$ under projection to the first (thus by invariance, under any) coordinate, equals $\mu$. Following the interpretation from above, such $\gamma$ may represent a probability distribution of states of a system consisting of infinitely many indistinguishable copies of a given particle. We say that such $\gamma$ is \emph{decorrelated} if it equals
\begin{equation}\label{decorrel}
 \gamma_0:=\mu^\infty.
\end{equation}
The natural way to define the $c$-interaction energy for such symmetric $\gamma$ consists of taking the following $\gamma$-average of pair interactions:
\begin{equation}\label{cgamma}
 \langle c,\gamma\rangle:=\lim_{N\to\infty}\frac{2}{N(N-1)}\sum_{1\le i <j\le N}\int_{X^\infty}c(x_i,x_j)d\gamma(x_1,x_2,\ldots).
\end{equation}
The question that we want to investigate is the following:

\begin{question}\label{q}
 Under which conditions on $c$ is $\gamma_0$ like in \eqref{decorrel} a minimizer (resp. unique minimizer) of $\langle c,\gamma\rangle$ among $\gamma\in\mathcal P_{sym}(X^\infty)$ such that $\gamma\mapsto\mu$?
\end{question}

 Our answer Question \ref{q} is to single out a \emph{necessary and sufficient} condition on $c$ for decorrelation valid in the same generality as that in which the problem makes sense. The condition is a convexity requirement on the energy operator associated to $c$. This can be very concretely reformulated as a \emph{balanced positive definiteness} condition on $c$, which gives a direct link to the classical theory of positive definite functions (see Definition \ref{posdef} and Lemma \ref{defposdef} below). We emphasize that costs $c$ like the examples in Theorem 1.2 of \cite{CFP}, but also less regular ones, are included in our setting. See Section \ref{posdefsec} for more examples.

\begin{mtheorem}\label{decorthm}
Let $X$ be a Polish space and consider a lower semi-continuous symmetric cost $c:X\times X\to [0,\infty]$.
\begin{enumerate}
 \item Assume that the functional
 \begin{equation}\label{k}
    Q\mapsto K(Q):=\int_{X\times X}c(x,y)dQ(x)dQ(y)\in [0,\infty] 
 \end{equation}
is convex on $\mathcal P(X)$. Suppose that $\mu\in\mathcal P(X)$ is such that $K(\mu)<\infty$. Then the measure \eqref{decorrel} is a minimizer of the infinite-body optimal transport problem 
\begin{equation}\label{min2}
 \inf\left\{\langle c,\gamma\rangle:\ \gamma\in\mathcal P_{sym}(X^\infty),\ \gamma\mapsto\mu\right\}.
\end{equation}
\item Assume that for all $\mu\in\mathcal P(X)\cap\{K<\infty\}$ the measure $\gamma_0=\mu^{\otimes\infty}$ is a minimizer of \eqref{min2}. Then $K$ is convex on $\mathcal P(X)$.
\end{enumerate}
Moreover the functional $K$ as in \eqref{k} is strictly convex on $\mathcal P(X)\cap\{K<\infty\}$ if and only if for any $\mu$ in this last set $\gamma_0$ is the unique minimizer of \eqref{min2}.
\end{mtheorem}
The discussion of the sharpness of our hypotheses is as follows.
\begin{itemize}
 \item The hypothesis that $X$ is Polish: this can be replaced (in the statement and also in our proof) by the weaker assumption that $X$ is a \emph{locally compact separable space}, see Remark \ref{extend}. However these seem to be the minimal topological assumptions needed for applying the most general De Finetti-Hewitt-Savage theorem \cite{hs} Theorem 7.4, as counterexamples to the representation theorems are known in more nonstandard measure spaces, as explained at the end of Section \ref{defisec}. 
 \item The regularity hypotheses on $c$: lower semi-continuity is essential to ensure the existence of a minimizer in \eqref{min2}. The symmetry requirement is harmless, since for $\gamma$ as in \eqref{min2} the value of $\langle c,\gamma\rangle$ is equal on $c$ and on its symmetrization. 
 \item The sign hypothesis on $c$: the minimization problem \eqref{min2} is insensitive to changing the cost $c$ by an additive constant, so our result trivially extends to the case where $c$ is required to be bounded below rather than nonnegative. If $c$ is unbounded below on $(\op{spt}\mu)^2$ then either \eqref{min2} loses interest and the infimum is $-\infty$, or $c$ is also unbounded above and then the limit in formula \eqref{cgamma} may not exist.
\end{itemize}
 An interesting possible direction for future developments is to look at $c$ unbounded both above and below and adapted classes of measures $\mu$. To generalize the present sharp characterization in that case, one must then base the whole discussion on more specialized (and to be defined and motivated) regularity classes of $c,\gamma$ that allow to give a meaning to \eqref{cgamma}. We leave this endeavor to future work.

\section{The De Finetti-Hewitt-Savage theorem}\label{defisec}
We let $X$ be a Polish space in this section, unless explicitly stated otherwise. When we speak of $\mathcal P(X)$ we implicitly select the $\sigma$-algebra $\mathcal B$ of Borel sets on $X$. In metrizable spaces this algebra is also the smallest one making continuous functions measurable \cite{boga}. Measures in $\mathcal P(X^\infty)$ are assumed measurable with respect to the $\sigma$-algebra $\mathcal B^\infty$ generated by the cylindrical sets $A_1\times\cdots\times A_k\times X^\infty$ with $k\in\mathbb N, A_i\in \mathcal B$.\\

When we speak of $\mathcal P(\mathcal P(X))$ the $\sigma$-algebra $\mathcal B^*$ that we consider on $\mathcal P(X)$ is the one of the Borel sets generated by open sets for the weak-* topology. This is the smallest topology making the functions $\mathcal P(X)\ni Q\mapsto Q(A)$ continuous for all $A\in \mathcal B$, so $\mathcal B^*$ is the smallest $\sigma$-algebra making all such functions measurable. It also makes all maps $\mathcal P(X)\ni Q\mapsto \int f dQ$ measurable for all continuous and bounded $f:X\to \mathbb R$.\\

For $Q\in\mathcal P(X)$ we define $Q^{\otimes\infty}\in \mathcal P_{sym}(X^\infty)$ by requiring 
\[Q^{\otimes\infty}(A_1\times\cdots\times A_k\times X^\infty):=\prod_{i=1}^kQ(A_i)\ \text{ for }A_i\in\mathcal B, i=1,\ldots,k.\]
With these assumptions, we mention the following result, also called De Finetti-Hewitt-Savage theorem, which we reformulate for a Polish space:
\begin{theorem}[Hewitt-Savage \cite{hs} Thm. 7.4, \cite{diaconisfreedman} Thm. 20]\label{hsthm}
 Let $X$ be a Polish space and $P\in\mathcal P_{sym}(X^\infty)$. Then there exists a unique measure $\mu\in\mathcal P(\mathcal P(X))$ such that 
 \[
 P=\int_{\mathcal P(X)} Q^{\otimes \infty} d\mu(Q).
 \]
\end{theorem}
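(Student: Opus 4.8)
The plan is to use the probabilistic approach via empirical measures and reverse martingales, which is well adapted to the Polish setting. Work on the probability space $(X^\infty,\mathcal B^\infty,P)$ and let $\xi_i:X^\infty\to X$ denote the $i$-th coordinate projection; by hypothesis the sequence $(\xi_i)_{i\ge 1}$ is exchangeable under $P$. The mixing measure $\mu$ will be recovered as the law of the almost sure weak limit of the empirical measures
\[
L_n:=\frac1n\sum_{i=1}^n\delta_{\xi_i}\in\mathcal P(X).
\]
Let $\mathcal E_n$ be the $\sigma$-algebra of events invariant under permutations of the first $n$ coordinates, and set $\mathcal E:=\bigcap_n\mathcal E_n$; since permutations of the first $n$ coordinates are among those of the first $n+1$, the filtration $(\mathcal E_n)_n$ is decreasing.

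First I would fix a bounded continuous $f:X\to\mathbb R$ and record the exchangeability identity $\mathbb E[f(\xi_i)\mid\mathcal E_n]=\frac1n\sum_{j=1}^n f(\xi_j)$ for $i\le n$, so that $M_n:=\frac1n\sum_{j=1}^n f(\xi_j)$ satisfies $\mathbb E[M_n\mid\mathcal E_{n+1}]=M_{n+1}$, i.e. $M_n$ is a reverse martingale for $(\mathcal E_n)_n$. The reverse martingale convergence theorem then gives $M_n\to\mathbb E[f(\xi_1)\mid\mathcal E]=:\Lambda(f)$ almost surely and in $L^1(P)$. Running this over a countable family $\{f_k\}\subset C_b(X)$ that is convergence determining for weak convergence on $\mathcal P(X)$ (such a family exists because $X$ is separable metric) produces a single $P$-full set on which $L_n(f_k)\to\Lambda(f_k)$ for every $k$. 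The role of Polishness enters here: any single Borel probability measure on a Polish space is tight, so $P$ is tight and one deduces that $(L_n)$ is $P$-almost surely tight; combined with convergence along the determining family, Prokhorov's theorem upgrades this to almost sure weak convergence $L_n\to Q$, where $\omega\mapsto Q(\omega)$ is a genuine $\mathcal P(X)$-valued random element, measurable for $\mathcal B^*$ as an a.s. limit of the measurable maps $L_n$. By construction $\int f\,dQ=\mathbb E[f(\xi_1)\mid\mathcal E]$ for all $f\in C_b(X)$, so $Q$ is a regular conditional law of $\xi_1$ given $\mathcal E$.

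The heart of the argument is the conditional independence: I would show that for bounded continuous $g_1,\dots,g_m$
\[
\mathbb E\Big[\prod_{\ell=1}^m g_\ell(\xi_\ell)\,\Big|\,\mathcal E\Big]=\prod_{\ell=1}^m\int g_\ell\,dQ\qquad P\text{-a.s.}
\]
This follows from the same exchangeability bookkeeping applied to products: the average of $\prod_\ell g_\ell(\xi_{j_\ell})$ over distinct index tuples $j_1,\dots,j_m\le n$ is $\mathcal E_n$-measurable, and as $n\to\infty$ the average over distinct tuples and the average over all tuples share the same limit because the diagonal contribution is $O(1/n)$; the latter factorizes as $\prod_\ell L_n(g_\ell)\to\prod_\ell\int g_\ell\,dQ$. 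Integrating this identity against the law $\mu:=Q_{\#}P\in\mathcal P(\mathcal P(X))$ of the random measure $Q$ gives the representation on cylinder functions, and a monotone class argument extends it to all of $\mathcal B^\infty$, yielding $P=\int_{\mathcal P(X)}R^{\otimes\infty}\,d\mu(R)$.

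For uniqueness, suppose also $P=\int R^{\otimes\infty}\,d\nu(R)$ with $\nu\in\mathcal P(\mathcal P(X))$. Under each fixed factor $R^{\otimes\infty}$ the strong law of large numbers gives $L_n\to R$ weakly, $R^{\otimes\infty}$-a.s., so under $P$ the empirical measures converge a.s. to a random measure whose law is exactly $\nu$; since this limiting law is intrinsic to $P$ it must coincide with the one produced above, forcing $\nu=\mu$. The main obstacle, and the only place where the topological hypothesis is genuinely used, is the passage from convergence of the scalar averages $L_n(f)$ for each test function to almost sure weak convergence of $L_n$ to a bona fide $\mathcal P(X)$-valued random variable: this is precisely where tightness together with Prokhorov's theorem and the Polish structure of $\mathcal P(X)$ are indispensable, and it is the reason the representation can fail in the more exotic measure spaces alluded to after the statement.
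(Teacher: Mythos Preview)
The paper does not give its own proof of this theorem: it is quoted from Hewitt--Savage and Diaconis--Freedman, and the surrounding discussion only surveys three proof strategies from the literature (the extreme-point/Choquet approach of \cite{hs}, the atomic-approximation approach of \cite{diaconisfreedman} Thm.~14, and Ressel's semigroup approach \cite{ressel}). So there is no in-paper proof to compare against in the strict sense.

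That said, your argument is a genuine fourth route, the probabilistic one via empirical measures and reverse martingales, and it is essentially correct. It differs from all three approaches the paper cites: rather than identifying $\mu$ abstractly as a Choquet representing measure for the convex set $\mathcal P_{sym}(X^\infty)$, or approximating by finite exchangeable sequences, you construct the disintegrating kernel $\omega\mapsto Q(\omega)$ explicitly as the a.s.\ weak limit of $L_n$, and uniqueness falls out of the strong law of large numbers. This buys a very concrete description of $\mu$ (as the law of the limiting empirical measure) and a proof that stays entirely inside elementary probability; the cost is that the passage from ``$L_n(f_k)$ converges for every $k$ in a countable determining family'' to ``$L_n$ converges weakly in $\mathcal P(X)$ to a genuine probability measure'' is the one genuinely delicate step, as you yourself flag. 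Your tightness sentence is a little quick: tightness of $P$ on $X^\infty$ is not quite the right object, what you need is tightness of the common one-dimensional marginal on $X$ together with an argument (e.g.\ via the bounded-Lipschitz metric and completeness of $\mathcal P(X)$, or via a compactification of $X$) ensuring the limiting functional is represented by a measure on $X$ and not merely on a compactification. This is standard and fillable, but it is exactly the place where the Polish hypothesis does the work, in parallel to how the paper notes that the Diaconis--Freedman proof needs Polishness for the density of atomic measures.
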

Note that the above result can be reformulated in the measurable space category, i.e. without reference to the metric or topology of $X$. In \cite{hs} the theorem is formulated more abstractly and in a slightly wider generality, namely for the Baire $\sigma$-algebra of a locally compact separable space. The proof of the extension is via the $1$-point compactification of the underlying space $X$, compactification which is then Polish.\\
A simpler proof of Theorem \ref{hsthm} is present in \cite{diaconisfreedman} Thm. 14 and again it uses a topology on $X$. As noted after the proof of that theorem (\cite{diaconisfreedman} p.751), the proof in that case relies on the approximation of general $\mu\in\mathcal P(\mathcal P(X))$ by atomic measures in the weak-* topology, which itself can only hold if $\mathcal P(X)$ (and thus $X$ itself) is Polish. The extension from \cite{hs} Thm.7.2 to \cite{hs} Thm.7.4 then applies also to \cite{diaconisfreedman}.\\
A third more abstract proof is due to Ressel \cite{ressel}, who reduces the above result to a similar in spirit extreme-point representation theorem for the class of exponentially bounded positive definite functions on general semigroups (see \cite{semi} for the terminology). This formalizes in a neat way some of the ideas of Hewitt-Savage's initial proof, via the concept of a semigroup of separating functions for a $\sigma$-algebra.\\

Overall these available methods seem to not extend much beyond the case of Baire $\sigma$-algebras of separable locally compact spaces and in fact the above result seems to be quite sharp, as pointed out by some negative results. Notably, in \cite{dubinsfreedman}, Thm. 2.14, an example of measure space $(X,\mathcal F)$ (obtained by restricting the Borel sets of $[0,1]$ to a ``very non-measurable'' $X\subset [0,1]$) and $P\in\mathcal P_{sym}(X^\infty,\mathcal F^\infty)$ are constructed, such that the thesis of Theorem \ref{hsthm} fails. Thus the statement of the representation theorem is not robust enough to resist emancipation from the setting of a Borel $\sigma$-algebra of a Polish space tout court. This does not rule out the possibility that weaker but more complex conditions on $\sigma$-algebras might extend the above result.

\section{Positive definite functions}\label{posdefsec}
We give first a condition in the setting of more regular $c$, which is formulated more elementarily.

\begin{definition}\label{posdef}
 Let $X$ be a set and consider a continuous bounded function $c:X\times X\to[0,\infty[$ such that $c(x,y)=c(y,x)$ for all $x,y\in X$. The function $c$ is then said to be \emph{balanced positive definite} in case the following is true for arbitrary distinct points $x_1,\ldots,x_n\in X$ and arbitrary real numbers $a_1,\ldots,a_n$:
 \[
  \sum_{i,j=1}^nc(x_i,x_j)a_ia_j\ge 0 \text{ whenever } \sum_{i=1}^n a_i=0.
 \]
 \end{definition}

The more classical notion (of which our condition is a weakening) is \emph{positive definiteness}, where one requires the positivity condition on all real sequences, not just on the balanced ones. In applications the stronger condition of positive definiteness seems to be easier to check.\\ 

In a Polish space and for bounded $c$ the balanced positive definiteness and the positive definiteness become equivalent to the ones in terms of $K$ on $\mathcal P(X)$ or on finite Borel measures $\mathcal M(X)$, as formulated in our Main Theorem:

\begin{lemma}\label{defposdef}
 Let $X$ be a Polish space and $c:X\times X\to[0,\infty[$ a continuous bounded function. For $Q\in\mathcal M(X)$ define $K(Q)$ like in \eqref{k}. Then the following hold
 \begin{itemize}
  \item $c$ is positive definite if and only if $K$ is convex on $\mathcal M(X)$.
  \item $c$ is balanced positive definite if and only if $K$ is convex on $\mathcal P(X)$.
 \end{itemize}
\end{lemma}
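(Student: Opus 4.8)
The plan is to exploit that $K$ is a \emph{quadratic} functional. I would introduce the symmetric bilinear form $B(P,R):=\int_{X\times X}c\,dP\,dR$ on finite signed measures, so that $K(Q)=B(Q,Q)$ and, for $Q_0,Q_1$ in any convex set and $t\in[0,1]$, the second-difference identity
\[
K\big((1-t)Q_0+tQ_1\big)-(1-t)K(Q_0)-tK(Q_1)=-t(1-t)\,B(Q_0-Q_1,Q_0-Q_1)
\]
holds by bilinearity. Consequently $K$ is convex on a convex set $C$ if and only if $B(\nu,\nu)\ge0$ for every $\nu$ in the difference set $C-C$. Thus both equivalences reduce to identifying $C-C$ and matching $B(\nu,\nu)\ge0$ with the stated (balanced) positive-definiteness condition. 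Throughout one may assume $c$ symmetric as in Definition \ref{posdef}, since $K$ and both positivity conditions depend only on the symmetric part of $c$.

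Next I would identify the difference sets. For $C=\mathcal M(X)$ the Jordan decomposition writes every finite signed measure as a difference of two nonnegative finite measures, so $\mathcal M(X)-\mathcal M(X)$ is \emph{all} finite signed measures. For $C=\mathcal P(X)$ the differences are exactly the signed measures of total mass $0$ with total variation at most $2$; since $B(\nu,\nu)$ is $2$-homogeneous its sign is unchanged under scaling, so convexity on $\mathcal P(X)$ is equivalent to $B(\nu,\nu)\ge0$ for \emph{all} mass-zero signed measures $\nu$. Testing these criteria on an atomic measure $\nu=\sum_i a_i\delta_{x_i}$ gives $B(\nu,\nu)=\sum_{i,j}c(x_i,x_j)a_ia_j$ with $\nu(X)=\sum_i a_i$, so the atomic case is precisely positive definiteness, respectively balanced positive definiteness. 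This already yields the directions ``convexity $\Rightarrow$ (balanced) positive definiteness'': given data $x_1,\dots,x_n,a_1,\dots,a_n$ (with $\sum_i a_i=0$ in the balanced case), split $a_i=a_i^+-a_i^-$ and take $Q_0=\sum_i a_i^+\delta_{x_i}$, $Q_1=\sum_i a_i^-\delta_{x_i}$, which lie in $\mathcal M(X)$, and, after normalising by their common mass, in $\mathcal P(X)$ when $\sum_i a_i=0$.

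The remaining, and main, point is the converse: to upgrade $B(\nu,\nu)\ge0$ from atomic $\nu$ to an arbitrary finite signed $\nu$ (of mass zero, in the balanced case). I would approximate: writing $\nu=\nu^+-\nu^-$, use that on a Polish space every finite measure is tight and hence is the weak-$*$ limit of finitely supported measures of the \emph{same} total mass (partition the support into small-diameter Borel cells and place an atom of the corresponding mass at a point of each cell). Choosing such atomic approximants $\mu_k^\pm\to\nu^\pm$ and setting $\nu_k:=\mu_k^+-\mu_k^-$ keeps $\nu_k(X)=\nu(X)$, so the mass-zero constraint is preserved, while $B(\nu_k,\nu_k)\ge0$ by the atomic case. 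Expanding $\nu_k\otimes\nu_k$ into the four products $\mu_k^{\pm}\otimes\mu_k^{\pm}$, each converging weak-$*$ on $X\times X$, and using that $c$ is bounded and continuous, I would pass to the limit $B(\nu_k,\nu_k)\to B(\nu,\nu)$ to conclude $B(\nu,\nu)\ge0$. The one technical nuisance is precisely this limit for \emph{signed} measures, where one must control the total variation of the approximants, guaranteed by the fixed-mass construction, so that the bounded continuous integrand $c$ behaves well; everything else is bookkeeping.
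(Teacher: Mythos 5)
Your proposal is correct and follows essentially the same route as the paper: both reduce convexity of the quadratic functional $K$ to nonnegativity of the bilinear form on the difference set ($\mathcal M(X)-\mathcal M(X)$, respectively the cone over $\mathcal P(X)-\mathcal P(X)$), obtain one direction by testing on atomic measures, and obtain the converse by weak-$*$ density of (balanced) atomic measures together with the boundedness and continuity of $c$. The only cosmetic difference is that you use the full second-difference identity where the paper uses the midpoint identity $0\le K\bigl(\tfrac{Q-Q'}{2}\bigr)=\tfrac12\bigl(K(Q)+K(Q')\bigr)-K\bigl(\tfrac{Q+Q'}{2}\bigr)$; your version of the approximation step is spelled out in more detail but is the same argument.
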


\begin{proof}
By homogeneity of $K$ we may reduce to the case of measures of total mass $\le1$ and to $a_i$ such that $\sum_i a_i=1$. The implications from right to left follow by testing $K$ on atomic measures. They hold under a measurability hypothesis only on $c$. For the opposite implications: the hypotheses on $X$ imply that atomic measures are weakly dense in $\mathcal M(X)$ and balanced atomic probability measures are weakly dense in the cone over $\mathcal P(X) - \mathcal P(X)$. By taking weak limits we see that $K\ge 0$ on these spaces. The following formula can be applied either to $Q,Q'$ in $\mathcal M_1(X)$ or in $\mathcal P(X)$ and implies the theses: 
\[
 0\le K\left(\frac{Q-Q'}{2}\right)=\frac12\left(K(Q) + K(Q')\right) - K\left(\frac{Q+Q'}{2}\right).
\]
\end{proof}

\begin{rmk}
We don't use balanced positive definiteness of $c$ directly, and as a consequence of this the main theorem deals with more general $c$. However Definition \ref{posdef} seems closer to the terminology used in the literature.
\end{rmk}

We list below some results from which applications to our result follow, either via Lemma \ref{defposdef} or otherwise. The aim is rather to give hints to the diversity of possible applications rather than to try and exhaust them. 
\begin{itemize}
 \item The $c(x,y)=\ell(x-y)$ as in the first case of Theorem 1.2 of \cite{CFP}, i.e. the ones with $\ell\in L^1(\mathbb R^d)\cap C_b(\mathbb R^d), \hat\ell\ge 0$ are positive definite, strictly so if $\hat\ell>0$. 
 \item Any power law energy $c(x,y)=|x-y|^{-s}$ on $\mathbb R^d$ for $0<s<d$ satisfies the hypotheses of the Main Theorem, and by the Fourier transform criterion of \cite{CFP} gives a strictly convex $K$ on $\mathcal M(\mathbb R^d)\cap \{K<\infty\}$. The same holds for $c(x,y)=|\log|x-y||$ in $X=\mathbb R^d, d\ge 1$.
 \item An remarkable property of positive definite functions is that they are \emph{closed under pointwise products}. This follows from the definition via the similar property for finite matrices. The same holds also for smaller classes where we impose further linear constraints on the coefficients, i.e. for balanced positive definite functions.
 \item Some characterizations of positive definite functions for the case where $X$ is $\mathbb R^n$ (with possibly $n=\infty$), e.g. like Berstein's theorem and its analogues, are present in \cite{Schoenberg}. Any one of the functions as in \cite{Schoenberg} thus verifies the condition of our Main Theorem.
 \item In the case of $X=\mathbb S^d$ a well-known \cite{schoenberg2} class of positive definite functions are characterizable among those of the form $c(x,y)=\ell(\langle x,y\rangle)$, where the scalar product is taken in $\mathbb R^{d+1}$. If $P^\lambda_n$ are the ultraspherical polynomials defined e.g. by the Rodrigues formula \cite{muller}
 \[
  P^\lambda_n(t)=\frac{(-2)^n\Gamma(n+\lambda)\Gamma(n+2\lambda)}{n!\Gamma(\lambda)\Gamma(2n+2\lambda)}(1-t^2)^{1/2+\lambda}\left(\frac{d}{dt}\right)^n(1-t^2)^{n+\lambda - 1/2},
 \]
then $c$ as above is positive (resp. strictly positive) definite if and only if $\ell$ is expressible as 
 \[
  \ell(t)=\sum_{n=0}^\infty a_n P^{(d-1)/2}_n(t),
 \]
 with positive (resp. strictly positive) coefficients $a_n$. 
 \item The above example can be seen (using the product formula for spherical harmonics \cite{muller} Thm. 6) as a special case of the following general framework. Assume that $H(X)$ is a Hilbert space of functions on $X$ which separate points, and that $\{\psi_n\}_{n\in \mathbb N}$ is an orthonormal basis of functions in $H(X)$. Then $c(x,y)$ is positive (resp. strictly positive) definite if we can express it as
 \[
  c(x,y)=\sum_{n=1}^\infty a_n\psi_n(x)\psi_n(y)
 \]
with positive (resp. strictly positive) coefficients $a_n$.
\end{itemize}

\section{Proof of the main result and extension}

\begin{proof}[Proof of the Main Theorem:]
By Theorem \ref{hsthm} any $\gamma\in\mathcal P_{sym}(X^\infty)$ can be represented as 
 \[
  \gamma=\int_{\mathcal P(X)}Q^{\otimes\infty}d\nu(Q)
 \]
for some probability measure $\nu$ on $\mathcal P(X)$. Then the averaging in formula \eqref{cgamma} gives 
\begin{eqnarray*}
 \frac{2}{N(N-1)}\int_{X^\infty}\sum_{1\le i <j\le N}c(x_i,x_j)d\gamma&=&\int_{\mathcal P(X)}\left(\frac{2}{N(N-1)}\int_{X^\infty}\sum_{1\le i <j\le N}c(x_i,x_j)dQ^{\otimes\infty}\right)d\nu(Q)\\
 &=&\int_{\mathcal P(X)} \left(\int_{X^2}c(x,y)d Q\otimes Q\right)d\nu(Q).
\end{eqnarray*}
Thus the minimization \eqref{min2} is identified with the following
\[
 \inf\left\{\int_{X^2}c(x,y)d\mu_2(x,y): \mu_2=\int_{\mathcal P(X)}Q\otimes Qd\nu, \mu =\int_{\mathcal P(X)}Qd\nu,\nu\in\mathcal P(\mathcal P(X))\right\}.
\]
The existence of minimizers follows from lower semi-continuity of $c$ by the classical methods (as e.g. \cite{villani} chapter 4). Point 1 of the thesis follows if we prove that $\nu$ equal to a Dirac mass on $\mu$ is a minimizer, and that it is the unique one if $K$ is strictly convex on $\mathcal P(X)\cap\{K<\infty\}$.\\

We may assume without loss of generality that the above infimum is finite thus $K(Q)<\infty$ for $\nu$-almost every $Q$. Note that the expression
\[
 Q,Q'\in\mathcal P(X)\mapsto E(Q,Q'):=\int_{X^2}c(x,y) dQ(x) dQ'(y)
\]
is a symmetric bilinear form on $\mathcal M(X)$, is positive on $\mathcal P(X)$  and satisfies $K(Q)=E(Q,Q)$. Then
\begin{equation}\label{diff}
 \int_{X^2}c(x,y)d\mu_2(x,y) - \int_{X^2}c(x,y)d\mu(x)d\mu(y) 
\end{equation}
can be re-expressed in terms of $E,K,\nu$ as follows. In the hope to achieve better clarity, and for the next lines only, we use the notation $\int_Af(x)d\lambda(x):=\langle f,\lambda\rangle_A$ for integrals on $A=X$ or $A=X^2$. First, just by definition and since $c$ is measurable,
\[
  \int_{X^2}c(x,y)d\mu_2(x,y)=\left\langle c,\int_{\mathcal P(X)}Q\otimes Q d\nu(Q)\right\rangle_{X^2}:= \int_{\mathcal P(X)}\left\langle c,Q\otimes Q\right\rangle_{X^2} d\nu(Q).
\]
Any lower semi-continuous function is pointwise supremum of a sequence of continuous bounded functions. If $c_n:X\times X\to [0,\infty[$ are continuous bounded symmetric functions such that pointwise $c_n\uparrow c$, then the functions $(y,Q)\mapsto\langle c_n(\cdot,y), Q \rangle_{X}$ are continuous with respect to the topology of weak-* convergence on $\mathcal P(X)$. Therefore their pointwise supremum (obtained by monotone convergence against fixed $Q$) is $(y,Q)\mapsto\langle c(\cdot,y), Q \rangle_{X}$ and therefore this function is still $\mathcal B\otimes\mathcal B^*$-measurable. We may thus use Tonelli's theorem and the finiteness hypothesis \eqref{mufinite} to write
\begin{eqnarray*}
 \langle c, \mu\otimes\mu\rangle_{X^2}&:=&\int_{\mathcal P(X)}\left\langle \int_{\mathcal P(X)} \langle c(\cdot,y), Q \rangle_{X}\ d\nu(Q)\ , Q'\right\rangle_{X}d\nu(Q')\\
 &=&\int_{\mathcal P(X)^2} \langle c, Q\otimes Q'\rangle_{X^2}\ d\nu(Q)d\nu(Q').
\end{eqnarray*}
In particular $E(Q,Q')<\infty$ for $\nu\otimes\nu$-almost every $(Q,Q')$. Back in our previous notation, we may now rewrite \eqref{diff} as
\begin{equation}\label{eformula}
 \int_{\mathcal P(X)^2}(E(Q,Q) - E(Q,Q'))d\nu(Q)d\nu(Q').
\end{equation}
Using the polarization formula (valid again by monotone convergence arguments)
\[
 2E(Q,Q') + K(Q) + K(Q')=K(Q+Q'),
\]
the quadraticity of $K$ and the symmetry of the formula \eqref{eformula} in $Q,Q'$ we have 
\[
 \text{\eqref{eformula} }=\frac{1}{2}\left(\int_{\mathcal P(X)}K(Q)d\nu(Q) -\int_{\mathcal P(X)^2}K\left(\frac{Q+Q'}{2}\right)d\nu(Q)d\nu(Q')\right).
\]
To justify the above and at the same time finish the proof, note that by convexity of $K$ and since $\nu$ is a probability measure
\begin{eqnarray*}
 \int_{\mathcal P(X)^2} K\left(\frac{Q+Q'}{2}\right)d\nu(Q)d\nu(Q') &\le& \int_{\mathcal P(X)^2}\frac{1}{2}\left(K(Q)+K(Q')\right)d\nu(Q)d\nu(Q')\\& =& \int_{\mathcal P(X)}K(Q)d\nu(Q),
\end{eqnarray*}
and if $K$ is strictly convex on $\mathcal P(X)\cap\{K<\infty\}$ then equality holds precisely for $\nu$ a Dirac mass.\\

If for all $Q,Q'\in\mathcal P(X)$ we denote $\nu:=\frac{1}{2}(\delta_Q + \delta_{Q'}),\mu:= \frac{1}{2}(Q + Q')$, and we test against $\nu$ the fact that $\gamma_0=\mu^{\otimes\infty}$ is a minimizer (resp. unique minimizer) of our problem, then from the same computations as above we obtain that $K$ is convex (resp. strictly convex) on $\mathcal P(X)\cap\{K<\infty\}$, whence the thesis.
\end{proof}

\begin{rmk}[extension to the ``locally compact separable'' case]\label{extend}
As mentioned in the introduction, the proof can be adapted to the case where (a) $X$ is required only to be a locally compact separable space (b) the Borel $\sigma$-algebra on $X$ is replaced by the Baire $\sigma$-algebra in the definition of $\mathcal P(X)$ and (c) the $\sigma$-algebra $\mathcal B^*$ on $\mathcal P(X)$ is also defined to be the Baire $\sigma$-algebra relative to the weak-* topology on $\mathcal P(X)$.\\

Note (\cite{boga} chapter 6) that the Baire $\sigma$-algebra is the smallest one with respect to which all continuous functions are measurable; it is the same as the Borel one in Polish spaces but might be smaller than it if $X$ is only assumed to be locally compact separable rather than Polish (that is, if $X$ is not second-countable).\\

Theorem \ref{decorthm} extends to the current more general setting (see \cite{hs} Thm. 7.4) and the semi-continuity of $c$ still yields the existence of a minimizer as above; our justification of Tonelli's theorem, which only uses the duality with continuous functions and lower semi-continuity of $c$, works the same way since for testing measurability we use the Baire $\sigma$-algebras only.
\end{rmk}

\section*{Acknowledgement}
I wish to warmly thank Codina Cotar and Gero Friesecke for sharing their insights on their work with me. I learned most of the background material for this work during the 2014 Fields Thematic Semester on Variational Problems in Physics, Economics and Geometry. I thank the Fields Institute, without whose relaxed and interactive atmosphere this work would probably not exist. This research is funded by a scholarship of the FSMP.

\bigskip

\noindent Mircea Petrache:
\newline {\tt mircea.petrache@upmc.fr}
\end{document}